\numberwithin{equation}{section}
\theoremstyle{plain}
\newtheorem{thm}{Theorem}[section]
\newtheorem{prop}[thm]{Proposition}
 \theoremstyle{definition}
\newtheorem{defn}[thm]{Definition}
\newtheorem{conj}[thm]{Conjecture}
\newcommand{\Hom}{{\rm{Hom}}}
\newcommand{\cl}[1]{\mathcal{#1}}
\newcommand{\bb}[1]{\mathbb{#1}}
\newcommand{\tr}{\operatorname{tr}}
\newcommand{\PSL}{\operatorname{PSL}}
\newcommand{\PGL}{\operatorname{PGL}}
\newcommand{\Gal}{\operatorname{Gal}}
\newcommand{\hfal}{\operatorname{h_{Fal}}}
\newcommand{\Ind}{\operatorname{Ind}}
\newcommand{\height}{\operatorname{ht}}
\begin{document}

\title{Signature $(n-2,2)$ CM Types and the Unitary Colmez Conjecture}

\author[S. Parenti]{Solly Parenti}

\address{University of Wisconsin-Madison \\ Department of Mathematics \\
Madison WI 53703} 

\email{sparenti@wisc.edu}

 \subjclass[2010]{Primary: 11G15}

 \keywords{Colmez conjecture}

\begin{abstract} Colmez conjectured a formula relating the Faltings height of CM abelian varieties to a certain linear combination of log derivatives of $L$-functions.  In this paper, we study the case of unitary CM fields and by studying the class functions that arise, we reduce the conjecture to a special case. Using the Galois action, we prove more cases of the Colmez Conjecture.
\end{abstract}

\maketitle

\section{Set Up and Theorem}
%We give a brief introduction to the Colmez conjecture and refer the reader to a previous paper of the author \cite{ParentiPSL2} for a more thorough introduction.

%Given a CM field $E$ and a CM type $\Phi$ of $E$, let $\Phi^c$ denote the extension of $\Phi$ to $E^c$, the Galois closure of $E$ and consider $\Phi^c$ as a function $\Phi^c:\Gal(E^c/\bb{Q}) \rightarrow \{0,1\}$. Let $\widetilde{\Phi^c}$ denote the reflex CM type and let $A_{\Phi}$ denote the convolution of $\Phi^c$ and $\widetilde{\Phi^c}$.  Denote by $A_{\Phi}^0$ the projection of $A_{\Phi}$ onto the space of class functions on $\Gal(E^c/\bb{Q})$

%Let $F$ be a totally real number field of degree $n$ over $\bb{Q}$, let $F^c$ be the Galois closure of $F$ and let $G=\text{Gal}(F^c/\bb{Q})$.  For $k$ an imaginary quadratic field, let $E$ be the CM field $E:= kF$ and let $E^c$ be its Galois closure which is given by $E^c = kF^c$.  Let $\rho:E \rightarrow E$ be the complex conjugation on $E$.  Let $H = \text{Gal}(F^c/F)$, where $\#H = h$, and write $\{\sigma_1,\ldots,\sigma_n\}$ for coset representatives of $G/H$, which we identify with embeddings $E \hookrightarrow \bb{C}$.  

The Colmez conjecture gives a formula for the Faltings height of a CM abelian variety in terms of log derivatives of $L$-functions arising from the CM type.  This conjecture has proven useful in giving bounds for the Faltings height of CM abelian varieties (see \cite{ColSurLa} for the case of elliptic curves and \cite{TsimermanAndreOort} where a weaker form of the Colmez conjecture is used in the proof of the Andr\'e-Oort conjecture for the moduli space of principally polarized abelian varieties).

\begin{defn} A unitary CM field $E$ is a CM field of the form $E=kF$, where $F$ is a totally real number field and $k \subseteq \bb{C}$ is an imaginary quadratic field.
\end{defn}

Given a CM field $E$ of degree $2n$, a CM type of $E$ consists of $n$ embeddings of $E$ into $\bb{C}$ such that no two of the embeddings differ by complex conjugation.  For unitary CM fields, we will stratify the CM types by signature.

\begin{defn}
Let $E=kF$ be a unitary CM field.  A CM type $\Phi \subseteq \Hom(E,\bb{C})$ has signature $(n-r,r)$ if exactly $n-r$ of the embeddings in $\Phi$ restrict to the identity $k \hookrightarrow \bb{C}$.  
\end{defn}

The main theorem of this paper is that we can reduce the Colmez conjecture in the unitary case to CM types of signature $(n-2,2)$.  

\begin{thm}
Let $E=kF$ be a unitary CM field.  Then, the Colmez conjecture holds for $E$ if and only if it holds for all CM types of signature $(n-2,2)$.  
\end{thm}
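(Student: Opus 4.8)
The plan is to work at the level of the class functions on the Galois group that encode the Colmez conjecture. Recall that the Colmez conjecture predicts an identity expressing the Faltings height of a CM abelian variety with CM type $(E,\Phi)$ as a linear combination of log derivatives of Artin $L$-functions, where the combination is governed by a class function $A_\Phi$ on $\Gal(\ovl{\bb{Q}}/\bb{Q})$ attached to the CM type. The first step is to make this precise: to each CM type $\Phi$ of $E$ I would associate its characteristic class function $A_\Phi$, and observe that the Colmez conjecture for $(E,\Phi)$ is a linear statement in $A_\Phi$. Consequently, if $A_\Phi$ can be written as a $\bb{Q}$-linear combination $A_\Phi = \sum_i c_i A_{\Phi_i}$ of the class functions of CM types $\Phi_i$ for which the conjecture is known, then the conjecture follows for $\Phi$ by linearity. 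Thus the whole theorem reduces to a statement in the representation theory of the relevant Galois group.

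The key step is therefore to understand the $\bb{Q}$-vector space $V$ spanned by the class functions $\{A_\Phi\}$ as $\Phi$ ranges over all CM types of $E$, and to identify inside it the subspace $V_{(n-2,2)}$ spanned by those $A_\Phi$ with $\Phi$ of signature $(n-2,2)$. I would exploit the unitary structure $E=kF$: the Galois action factors through a group built from $\Gal(k/\bb{Q})=\{1,c\}$ and the permutation action on the real embeddings of $F$, so the class functions $A_\Phi$ are naturally indexed by combinatorial signature data. The plan is to decompose $V$ according to signature, $V=\bigoplus_r V_{(n-r,r)}$ or at least to exhibit spanning relations across signatures, and to show that every $A_\Phi$ (of arbitrary signature) lies in the $\bb{Q}$-span of the signature-$(n-2,2)$ class functions. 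The natural mechanism for producing such relations is an inclusion–exclusion or averaging identity: summing $A_\Phi$ over families of CM types obtained by flipping a controlled number of embeddings past complex conjugation should telescope lower-signature class functions into combinations of the signature-$(n-2,2)$ ones, with the signature $(n-1,1)$ and $(n,0)$ pieces corresponding to the abelian and degenerate cases that are already known. Here the Galois action is decisive: averaging $A_\Phi$ over a Galois orbit collapses it onto a smaller, more symmetric class function, and these orbit averages are precisely what link the different signature strata.

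The main obstacle I anticipate is establishing that the signature-$(n-2,2)$ class functions genuinely span all the higher-signature ones, rather than merely a proper subspace. It is easy to see that signature $(n-2,2)$ cannot be reduced further — signatures $(n,0)$ and $(n-1,1)$ give abelian (or otherwise tractable) $L$-functions so the conjecture is already available there, which is why the reduction stops at $2$ — but proving that no information is lost at signatures $r\geq 3$ requires an explicit combinatorial identity expressing $A_\Phi$ for $|\,\text{signature}\,|=r$ in terms of the $r=2$ class functions. I expect this to come down to a rank computation: showing that the map from formal combinations of signature-$(n-2,2)$ types to $V$ is surjective onto the full span, equivalently that the relations among the $A_\Phi$ are generated in signature $\leq 2$. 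Concretely I would set up the characters of the induced representations that define the $A_\Phi$, compute their inner products or their values on conjugacy classes, and verify the surjectivity by a dimension count or by exhibiting explicit rational coefficients. The forward direction of the iff (the conjecture for $E$ implies it for all signature-$(n-2,2)$ types) is immediate since those are among the CM types of $E$; all the content is in the reverse direction and hence in this spanning/rank argument.
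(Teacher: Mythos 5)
Your proposal follows essentially the same route as the paper: both sides of the conjecture are $\bb{Q}$-linear in $A_{\Phi}^0$, the base cases of signature $(n,0)$ and $(n-1,1)$ are supplied by Yang--Yin, and the reduction rests on an explicit inclusion--exclusion identity expressing $A_{\Phi_S}^0$ for $\#S=\epsilon$ in terms of the $A_{\Phi_{\{i,j\}}}^0$, $A_{\Phi_{\{i\}}}^0$, and $A_{\Phi_{\varnothing}}^0$ (the paper's Proposition 2.3, with coefficients $1$, $-(\epsilon-2)$, and $(\epsilon-1)(\epsilon-2)/2$, obtained from an explicit convolution computation of $A_{\Phi_S}^0$ rather than a rank or dimension count). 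The one piece you leave unverified --- actually exhibiting those rational coefficients --- is exactly the computation the paper carries out, and your plan to do it by expanding the convolution of the type with its reflex would succeed; note only that the Galois-orbit averaging you invoke is not needed for the reduction itself, but rather for the applications in the paper's Section 3.
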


We spend the remainder of this section providing a brief description of the Colmez conjecture and refer the reader to \cite{YangYin} giving a more thorough background to the conjecture.  Section 2 contains the proof of Theorem 1.3 and in Section 3 we apply Theorem 1.3 to obtain examples of CM fields where the Colmez conjecture holds.

%Given a CM type $\Phi$ of $E$, we compute the corresponding class function $A_{\Phi}^0$.  We then rewrite $A_{\Phi}^0$ in terms of CM types of signatures $(n,0), (n-1,1),$ and $(n-2,2)$.  Then, the linearity of Colmez's original constructions together with recent refinements of the average form of the Colmez conjecture will imply the result.

%In the remainder of this section, we offer a brief description of the Colmez conjecture and refer the reader to a previous paper \cite{YangYin} providing a more thorough background to the conjecture. Section 2 contains a proof of Theorem 1.1 and Section 3 contains further comments relating to other work.

Let $\Phi$ be a CM type of a CM field $E$ and identify $\Phi$ with its characteristic function $\Phi:\Hom(E,\bb{C}) \rightarrow\{0,1\}$. If $E^c$ denotes the Galois closure of $E$ (which is also a CM field), then the restriction map $\Hom(E^c,\bb{C}) \rightarrow \Hom(E,\bb{C})$ can be used to extend $\Phi$ to $\Phi^c$, a CM type on $E^c$.

Choosing an identification of $\Hom(E^c,\bb{C})$ with $\Gal(E^c/\bb{Q})$, we obtain a function $\Phi^c:\Gal(E^c/\bb{Q}) \rightarrow \{0,1\}$ and we define the reflex CM type $\widetilde{\Phi^c}:\Gal(E^c/\bb{Q}) \rightarrow\{0,1\}$ by $\widetilde{\Phi^c}(g) = \Phi^c(g^{-1})$.  

Let $A_{\Phi}:\Gal(E^c/\bb{Q}) \rightarrow \bb{C}$ denote the function we obtain by taking a normalized convolution of $\Phi^c$ and $\widetilde{\Phi^c}$.  More concretely,
\[
A_{\Phi}(g) = \frac{1}{\#\Gal(E^c/\bb{Q})}\sum_{\sigma \in \Gal(E^c/\bb{Q})}\Phi^c(\sigma) \widetilde{\Phi^c}(\sigma^{-1}g).
\]
%%% Maybe include this stuff only in section 2, when we actually use it to compute things. Also, make sure we don't need a factor of 1/size of group on bottom.  Woops, that's all from Tonghai.
%Let $\operatorname{Maps}(\Gal(E^c/\bb{Q}),\bb{C})$ denote the set of all maps from $\Gal(E^c/\bb{Q}) 
%We will often use the isomorphism $\operatorname{Maps}(\Gal(E^c/\bb{Q}),\bb{C}) \cong \bb{C}[\Gal(E^c/\bb{Q})]$ between the ring of all maps from $\Gal(E^c/\bb{Q})$ to $\bb{C}$ (a ring under convolution) and the group algebra of $\Gal(E^c/\bb{Q})$.  
%An isomorphism we will often use is the map $\operatorname{Maps}(\Gal(E^c/\bb{Q}),\bb{C}) \cong \bb{C}[\Gal(E^c/\bb{Q})]$.
To obtain a class function $A_{\Phi}^0$, we take the average of $A_{\Phi}$ among conjugates in $\Gal(E^c/\bb{Q})$.  That is to say,
\[
A_{\Phi}^0(g) = \frac{1}{\#\Gal(E^c/\bb{Q})}\sum_{h \in \Gal(E^c/\bb{Q})} A_{\Phi}(hgh^{-1}).
\]

As $A_{\Phi}^0$ is a class function, we may write 
\[
A_{\Phi}^0= \sum_{\chi} a_{\chi}\chi,
\]
where $a_{\chi} \in \bb{C}$ and $\chi$ ranges through the irreducible representations of $\Gal(E^c/\bb{Q})$. Then define the function $Z(s,A_{\Phi}^0)$ by
\[
Z(s,A_{\Phi}^0):= \sum_{\chi}a_{\chi}Z(s,\chi), \quad \quad Z(s,\chi):= \frac{L'(s,\chi)}{L(s,\chi)} + \frac{1}{2}\log f_{\chi},
\]
where $L(s,\chi)$ is the Artin $L$-function of $\chi$ and $f_{\chi}$ is the Artin conductor of $\chi$.

Let $\bb{Q}^{\text{cm}}$ denote the compositum of all CM number fields.  This field is an infinite degree Galois extension of $\bb{Q}$ with a well defined complex conjugation, which we will denote by $\rho$.  Via the quotient map $\Gal(\bb{Q}^{\text{cm}}/\bb{Q}) \twoheadrightarrow\Gal(E^c/\bb{Q})$, we may consider $A_{\Phi}^0$ as a class function on $\Gal(\bb{Q}^{\text{cm}}/\bb{Q})$.  

The other side of the conjecture involves the Faltings height of CM abelian varieties.  If $\Phi$ is a CM type of a CM field $E$, let $X_{\Phi}$ be an abelian variety of dimension $n$ with CM by $(\cl{O}_E,\Phi)$.  We can find a number field $L$ over which $X_{\Phi}$ has everywhere good reduction.  Denote by $\cl{X}_{\Phi}$ the Neron model of $X_{\Phi}$ defined over $\cl{O}_L$ and let $\epsilon:\operatorname{Spec}(\cl{O}_L) \rightarrow \cl{X}_{\Phi}$ be the zero section.  Define $\omega_{\cl{X}_{\Phi}}$ as the following line bundle on $\operatorname{Spec}(\cl{O}_L)$,
\[
\omega_{\cl{X}_{\Phi}}=\epsilon^{\ast}(\Omega^n_{\cl{X}_{\Phi}/\cl{O}_L}),
\]
and for a non-zero section $\alpha$ of $ \omega_{\cl{X}_{\Phi}}$, we define the (stable) Faltings height of $X_{\Phi}$ via 
\[
\hfal(X_{\Phi}):= \frac{-1}{2[L:\bb{Q}]} \sum_{\sigma:L \hookrightarrow \bb{C}} \log \Bigg\lvert \int_{X_{\Phi}^{\sigma}(\bb{C})} \alpha^{\sigma} \wedge \overline{\alpha^{\sigma}}\Bigg\rvert + \log \lvert \omega_{\cl{X}_{\Phi}/\cl{O}_L} / \cl{O}_L \alpha \rvert.
\]

This definition of stable Faltings height is independent of choice of $\alpha$ and choice of $L$ over which $X_{\Phi}$ obtains everywhere good reduction.

In his 1993 paper \cite{Col}, Colmez looks at $\cl{CM}^0$, the $\bb{Q}$ vector space of class functions $f:\Gal(\bb{Q}^{\text{cm}}/\bb{Q}) \rightarrow \bb{Q}$ such that $f(g) + f(\rho g)$ is independent of $g \in \Gal(\bb{Q}^{\text{cm}}/\bb{Q})$. One can check that for every CM type $\Phi$, the function $A_{\Phi}^0$ is an element of $\cl{CM}^0$. Colmez defines a $\bb{Q}$-linear height function $\height:\cl{CM}^0 \rightarrow \bb{R}$ such that if $X_{\Phi}$ is an abelian variety with CM by $(\cl{O}_E,\Phi)$, then 
\[
\hfal(X_{\Phi}) = -\height(A_{\Phi}^0).
\]
Here, $E$ is a CM field, $\Phi$ is a CM type of $E$, and $\hfal$ denotes the Faltings height of an abelian variety.  The $\bb{Q}$-linearity of Colmez's $\height$ will be important to us later.  Colmez's conjecture is the following alternate formula for $\height(A_{\Phi}^0)$.

\begin{conj}[Colmez]
For any CM type $\Phi$, $\height(A_{\Phi}^0) = Z(0,A_{\Phi}^0)$.
\end{conj}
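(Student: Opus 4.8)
The plan is to exploit the $\bb{Q}$-linearity of both sides of the claimed identity. Colmez's height $\height:\cl{CM}^0 \to \bb{R}$ is $\bb{Q}$-linear by construction, and the assignment $A^0 \mapsto Z(0,A^0)$ is linear in the coefficients $a_\chi$ of the expansion $A^0=\sum_\chi a_\chi \chi$ by the very definition of $Z(s,A^0)$. Hence the two functionals $\height(\cdot)$ and $Z(0,\cdot)$ on $\cl{CM}^0$ agree everywhere as soon as they agree on a spanning set. First I would therefore choose a convenient generating set of $\cl{CM}^0$ assembled from class functions $A_\Phi^0$ for which the Faltings height $\hfal(X_\Phi)$ is accessible by explicit means, and reduce the conjecture to verifying the equality on those generators.

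The natural base case is that of CM fields $E$ with $E^c/\bb{Q}$ abelian. There every irreducible $\chi$ is a Dirichlet character, $L(s,\chi)$ is a Dirichlet $L$-function, and the CM abelian varieties in question are isogenous to products of CM factors whose periods are known. Here I would compute $\hfal$ directly via the Chowla--Selberg formula and its extension (Lerch's evaluation of the relevant $\Gamma$-monomials) to arbitrary abelian CM fields, obtaining the height as an explicit combination of logarithmic derivatives $L'(0,\chi)/L(0,\chi)$ together with conductor terms $\tfrac12\log f_\chi$. Comparing with the definition of $Z(0,\chi)$ then yields $\height(A_\Phi^0)=Z(0,A_\Phi^0)$ on the abelian part of $\cl{CM}^0$.

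To pass from this base case to a general CM type I would invoke Artin formalism alongside the decomposition of $A_\Phi^0$ into irreducible characters of $\Gal(E^c/\bb{Q})$. Since $Z(s,\cdot)$ is compatible with factorization of $L$-functions and with induction of characters, a presentation of $A_\Phi^0$ as a $\bb{Q}$-combination of characters induced from abelian (or, more generally, solvable) subextensions would let me reduce each summand to a previously established instance. Whenever $A_\Phi^0$ lies in the $\bb{Q}$-span of such monomial class functions, this already settles the conjecture for $E$; combining it with the reduction of Theorem 1.3 is what confines the remaining difficulty to CM types of signature $(n-2,2)$.

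The main obstacle is the contribution of the genuinely non-abelian characters $\chi$ occurring in $A_\Phi^0$, for which no period formula is available. For these the Faltings height must instead be accessed through arithmetic intersection theory on an integral model of the associated unitary Shimura variety: one computes the arithmetic degree of the Hodge line bundle $\omega_{\cl{X}_\Phi}$ and matches it, via an arithmetic analogue of the Siegel--Weil formula, against $L'(0,\chi)/L(0,\chi)$. Controlling the archimedean Green's-function terms and the contributions at primes of bad reduction in this comparison is the crux of the argument, and it is precisely this step that remains open in general; the value of Theorem 1.3 is that it reduces this hard analytic input to the single family of signature $(n-2,2)$ types.
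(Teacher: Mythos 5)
There is no proof of this statement in the paper to compare against: the statement is Conjecture~1.4, the Colmez conjecture itself, and it is an open problem. The paper never claims to prove it; its contribution is to \emph{reduce} the conjecture for unitary CM fields $E=kF$ to the signature $(n-2,2)$ case (Theorem~1.3, proved via the explicit computation of $A_{\Phi_S}^0$ in Theorem~2.1, the linear-combination identity of Proposition~2.3, and the $\bb{Q}$-linearity of $\height$ and $Z(0,\cdot)$ together with the Yang--Yin results for signatures $(n,0)$ and $(n-1,1)$), and then to verify new cases when $\Gal(F^c/\bb{Q})$ acts $2$-transitively on $\Gal(F^c/\bb{Q})/\Gal(F^c/F)$. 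Your proposal should therefore not be read as a proof, and indeed it is not one: your own final paragraph concedes that the comparison of the non-abelian contributions with $L'(0,\chi)/L(0,\chi)$ via arithmetic intersection theory ``remains open in general.'' That concession is the whole ballgame --- it is exactly the content of the conjecture.

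Beyond that global issue, the specific reduction strategy in your first and third paragraphs has a gap you should be aware of. The linearity of $\height$ and $Z(0,\cdot)$ does let you transfer known cases to $\bb{Q}$-linear combinations of them --- that is precisely the mechanism of Theorem~2.4 in the paper --- but it only helps if the particular class function $A_\Phi^0$ you care about actually lies in the $\bb{Q}$-span of class functions for which the identity is already established. The known inputs (Colmez--Obus for abelian $E^c/\bb{Q}$, Yang--Yin for the averaged version and for signatures $(n,0)$ and $(n-1,1)$) do not span enough of $\cl{CM}^0$: a general $A_\Phi^0$ involves irreducible characters of $\Gal(E^c/\bb{Q})$ that are not monomial over abelian subextensions with the required CM structure, and Artin formalism alone does not express them in terms of settled cases. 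This is why the paper's reduction stops at signature $(n-2,2)$ rather than at zero: the cross terms $\sum_{g}g\sigma_iH\sigma_j^{-1}g^{-1}$ with $i\neq j$ in Theorem~2.1 are exactly the part of $A_{\Phi_S}^0$ that cannot be eliminated by linear algebra over the known cases unless the Galois action on pairs of cosets is transitive (Theorem~3.1).
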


\section{Proof of Theorem}
Before we start the proof of Theorem 1.3, let us introduce some notation.  If $F$ is a totally real number field of degree $n$, denote by $F^c$ the Galois closure of $F$.  Let $k$ be an imaginary quadratic field and let $E:=kF$ be a unitary CM field with complex conjugation $\rho$. We will denote the Galois closure of $E$ by $E^c$, and thus $E^c = kF^c$.  Furthermore, let $H := \Gal(F^c/F) \leq \Gal(F^c/\bb{Q})=:G$ and suppose $\#H = h$.  Then, we can identify the embeddings of $F$ into $\bb{C}$, which we will call $\{\sigma_1,\ldots,\sigma_n\}$, with coset representatives for $H \backslash G$.

An embedding $E \hookrightarrow \bb{C}$ is uniquely determined by a pair of embeddings $F \hookrightarrow \bb{C}$ and $k \hookrightarrow\bb{C}$.  We denote by $\{1,\rho\}$ the two embeddings of $k$ into $\bb{C}$ and for an embedding $\sigma:F \rightarrow \bb{C}$, we write $\rho^{i}\sigma$ for the embedding of $E$ into $\bb{C}$ given by the pair $\{\rho^i,\sigma\}$.  If $i=0$, we simply write $\sigma$ for $1\sigma$.

A CM type of $E:=kF$ consists of a choice of one of the embeddings $k \hookrightarrow \bb{C}$ for each embedding of $F \hookrightarrow \bb{C}$. Thus we can parametrize CM types of $E$ via subsets of $\{1,2,\ldots,n\}$. Given $S \subseteq \{1,2,\ldots,n\}$, the corresponding CM type of $E$ is given by
\[
\Phi_S= \{\rho^{j_i}\sigma_i : j_i = 1 \text{ if } i \in S, j_i = 0 \text{ if } i \not\in S\}.
\]
Then, $\Phi_S$ is a CM type of signature $(n-\epsilon,\epsilon)$, where $\epsilon = \#S$. We will often write CM types as sums, 
\begin{align*}
\Phi_S&= \sum_{i \in S}\rho\sigma_i + \sum_{i \not\in S} \sigma_i \\
&= \tr_{E/k} + (\rho-1)\sum_{i \in S} \sigma_i.
\end{align*}

The first step in the proof of the theorem is an explicit calculation of $A_{\Phi_S}^0$.
\begin{thm}
Let $S \subseteq\{1,2,\ldots n\}$ be of size $\epsilon$.  Then,
\begin{align*}
A_{\Phi_S}^0 &= \frac{1}{2}\tr_{E^c/k} - \frac{\epsilon}{n}(1-\rho)\tr_{E^c/k} + \frac{\epsilon}{n^2}(1-\rho)\chi_{\text{Ind}_H^G(\chi_0)}\\
&+ \frac{1}{hn^2}(1-\rho)\sum_{g \in G}g \Bigg( \sum_{i \neq j \in S} \sigma_iH\sigma_j^{-1} \Bigg) g^{-1}.
\end{align*}
\end{thm}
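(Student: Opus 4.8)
The plan is to recast the whole construction inside the group algebra $\bb{C}[\Gamma]$, where $\Gamma := \Gal(E^c/\bb{Q})$, and to exploit that $\Gamma$ splits as a direct product. Since $k$ is imaginary quadratic and $F^c$ is totally real we have $k\cap F^c=\bb{Q}$ with both Galois over $\bb{Q}$, so restriction gives $\Gamma\cong\Gal(k/\bb{Q})\times\Gal(F^c/\bb{Q})=\langle\rho\rangle\times G$ with $\rho$ central of order $2$. Identifying a function $f\colon\Gamma\to\bb{C}$ with the element $\sum_\gamma f(\gamma)[\gamma]$ of $\bb{C}[\Gamma]$, the convolution defining $A_\Phi$ becomes the product $A_\Phi=\tfrac1N\,\Phi^c\cdot\widetilde{\Phi^c}$ (with $N=\#\Gamma=2hn$), the reflex $\widetilde{(\cdot)}$ is the antipode $[\gamma]\mapsto[\gamma^{-1}]$ (an anti-homomorphism), and passing from $A_\Phi$ to $A_\Phi^0$ is the conjugation-averaging projection onto the center. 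The first task is to write $\Phi_S^c$ in these terms: the decomposition $\Phi_S=\tr_{E/k}+(\rho-1)\sum_{i\in S}\sigma_i$ lifts embedding-by-embedding to $E^c$, giving $\Phi_S^c=T+uP$, where $T:=\tr_{E^c/k}=\sum_{g\in G}[g]$, $u:=\rho-1$, and $P:=\sum_{g\in G_S}[g]$ is the lift of $\sum_{i\in S}\sigma_i$, with $G_S=\bigsqcup_{i\in S}\sigma_iH$ the union of the cosets attached to $S$.

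Next I would multiply out. Two elementary facts drive the computation: $\rho$ is central, so $u$ is central and $u^2=(\rho-1)^2=-2u$; and $T=\sum_{g\in G}[g]$ is absorbing, $T[x]=[x]T=T$ for all $x\in G$, hence $Tv=vT=(\text{sum of coefficients of }v)\,T$ for any $v\in\bb{C}[G]$. Since the antipode fixes $u$ and sends $T$ to itself (summation over $G$ is inversion-invariant), one gets $\widetilde{\Phi_S^c}=T+uP^{\ast}$ with $P^{\ast}:=\widetilde{P}=\sum_{g\in G_S}[g^{-1}]$, and then
\[
\Phi_S^c\,\widetilde{\Phi_S^c}=T^2+u(TP^{\ast}+PT)+u^2PP^{\ast}=hn\,T+2h\epsilon\,u\,T-2u\,PP^{\ast}.
\]
Dividing by $N=2hn$ yields $A_{\Phi_S}=\tfrac12T-\tfrac{\epsilon}{n}(1-\rho)T+\tfrac1{hn}(1-\rho)PP^{\ast}$, whose first two terms are already central and hence survive conjugation-averaging unchanged.

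It remains to average $PP^{\ast}$ over conjugation and identify the pieces. Expressing $G_S$ through the representatives $\sigma_i$ and collapsing the two inner sums over $H$, one finds $PP^{\ast}=h\sum_{i,j\in S}\sigma_iH\sigma_j^{-1}$, where $\sigma_iH\sigma_j^{-1}:=\sum_{h\in H}[\sigma_ih\sigma_j^{-1}]$ in the notation of the theorem. Because $\rho$ is central, the conjugation-average over $\Gamma$ collapses to $\tfrac1{hn}\sum_{g\in G}g(\cdot)g^{-1}$, so the third term of $A_{\Phi_S}^0$ becomes $\tfrac1{hn^2}(1-\rho)\sum_{i,j\in S}\sum_{g\in G}g(\sigma_iH\sigma_j^{-1})g^{-1}$. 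I then split the double sum over $(i,j)$ into its off-diagonal part ($i\ne j$), which is literally the last term of the claimed formula, and its diagonal part ($i=j$). For the diagonal part the substitution $g\mapsto g\sigma_i^{-1}$ turns each inner sum into $\sum_{g\in G}gHg^{-1}$ with $gHg^{-1}:=\sum_{h\in H}[ghg^{-1}]$, independent of $i$, so it equals $\tfrac{\epsilon}{hn^2}(1-\rho)\sum_{g\in G}gHg^{-1}$. Finally, comparing coefficients shows that the central element $\tfrac1h\sum_{g\in G}gHg^{-1}$ assigns to $[x]$ the value $\tfrac1h\#\{g\in G:g^{-1}xg\in H\}$, which is exactly $\chi_{\Ind_H^G(\chi_0)}(x)$ for $\chi_0$ the trivial character of $H$; hence the diagonal part is $\tfrac{\epsilon}{n^2}(1-\rho)\chi_{\Ind_H^G(\chi_0)}$, completing the formula.

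The main obstacle is this last paragraph: all the genuine content sits in correctly evaluating and conjugation-averaging $PP^{\ast}$ while tracking the coset conventions (the identification of $\Hom(F,\bb{C})$ with cosets of $H$, and the direction of inversion in $P^{\ast}$), and in recognizing the diagonal contribution as the character of $\Ind_H^G(\chi_0)$ rather than leaving it as an opaque sum of $H$-conjugates. The earlier product expansion is routine once one notices that the centrality of $\rho$ forces $u^2=-2u$ and that $T$ is absorbing, which together collapse $T^2$, $TP^{\ast}$ and $PT$ immediately.
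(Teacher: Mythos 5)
Your proposal is correct and follows essentially the same route as the paper: expand $\Phi_S^c\widetilde{\Phi_S^c}$ in the group algebra using the decomposition $\tr_{E^c/k}+(\rho-1)\sum_{i\in S}\sigma_iH$, project onto class functions by averaging over $G$-conjugation, and split the double sum over $(i,j)$ into its off-diagonal part and a diagonal part identified with $\frac{\epsilon}{n^2}(1-\rho)\chi_{\Ind_H^G(\chi_0)}$ via $\sum_{g\in G}gHg^{-1}=h\chi_{\Ind_H^G(\chi_0)}$. The only cosmetic difference is that you verify this last identity by the Frobenius induced-character formula rather than by counting fixed points of the permutation representation on $H\backslash G$, as the paper's Proposition 2.2 does.
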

\begin{proof}
Recall that \[
\Phi_S = \tr_{E/k}+(\rho-1)\sum_{i \in S} \sigma_i.
\]
Extending $\Phi_S$ to $\Phi_S^c$, the CM type on $E^c$, amounts to determining which embeddings $E^c \hookrightarrow \bb{C}$ when restricted to $E$ are in $\Phi_S$. Since $E$ is the fixed field in $E^c$ by the subgroup $H$, $\Phi_S^c$ is given by
\[
\Phi_S^c = \tr_{E^c/k} + (\rho-1)\sum_{i \in S}  \sigma_i H.
\]
When we write $\Phi_S^c$ as a sum in this manner, we are interpreting $\Phi_S^c$ as an element of $\bb{C}[\Gal(E^c/\bb{Q})]$ which is isomorphic (as a ring) to the ring (under convolution) of all maps from $\Gal(E^c/\bb{Q})$ to $\bb{C}$. Next we find the reflex type $\widetilde{\Phi_S^c}$ by inverting every element in $\Phi_S^c$,
\[
\widetilde{\Phi_S^c} = \tr_{E^c/k} + (\rho-1)\sum_{j\in S} H\sigma_j^{-1}.
\]
Then take the convolution of $\Phi_S^c$ and $\widetilde{\Phi_S^c}$,
\begin{align*}
A_{\Phi_S} &= \frac{1}{[E^c:\bb{Q}]} \Phi^c\widetilde{\Phi^c}\\
&= \frac{1}{2hn}\Bigg(\tr_{E^c/k} + (\rho-1)\sum_{i\in S}  \sigma_i H\Bigg)\Bigg( \tr_{E^c/k} + (\rho-1)\sum_{j\in S} H\sigma_j^{-1}\Bigg) \\
&= \frac{1}{2}\tr_{E^c/k} - \frac{\epsilon}{n}(1-\rho)\tr_{E^c/k} + \frac{1}{n}(1-\rho) \sum_{i,j\in S}  \sigma_iH\sigma_{j}^{-1}.
\end{align*}
Finally, we need to project $A_{\Phi_S}$ onto the space of class functions to obtain $A_{\Phi_S}^0$,
\begin{equation}
A_{\Phi_S}^0 = \frac{1}{2}\tr_{E^c/k} - \frac{\epsilon}{n}(1-\rho)\tr_{E^c/k} + \frac{1}{n}(1-\rho) \frac{1}{hn}\sum_{g \in G} g \Bigg( \sum_{i,j\in S}\sigma_iH\sigma_j^{-1}\Bigg) g^{-1}.
\end{equation}

The main difficulty in (2.1) is the final term.  We first look at the elements of the sum with $i=j$.
%\[
%\star_S = \frac{1}{hn}\sum_{g \in G} g \Bigg( \sum_{i\in S}\sigma_iH\sigma_i^{-1}\Bigg) g^{-1} + \frac{1}{hn}\sum_{g \in G} g \Bigg( \sum_{i\neq j \in S}\sigma_iH\sigma_j^{-1}\Bigg) g^{-1}.
%\]
%If we first look at the terms of $\star_S$ where $i = j$, we can make the following simplification,
\begin{align}
\frac{1}{hn}\sum_{g \in G} g \Bigg( \sum_{i\in S}\sigma_iH\sigma_i^{-1}\Bigg) g^{-1}&= \frac{1}{hn}\sum_{i \in S}\Bigg( \sum_{g \in G} g \sigma_i H \sigma_i^{-1}g^{-1} \Bigg) \\
&= \frac{1}{hn} \sum_{i\in S}  \Bigg(\sum_{g \in G} gHg^{-1} \Bigg) \\
&= \frac{\epsilon}{hn}\sum_{g \in G} gHg^{-1}.
\end{align}
The following proposition simplifies (2.4) and combining the following Proposition with equation (2.1) concludes the proof.
\end{proof}
\begin{prop}
Let $\chi_0:H \rightarrow\bb{C}$ be the trivial character.  As functions $G \rightarrow \bb{C}$, we have the relation
\[
\sum_{g \in G} gHg^{-1} = h\chi_{\Ind_H^G(\chi_0)}.
\]
\end{prop}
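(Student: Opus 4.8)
The plan is to evaluate both sides as functions on $G$ and match them pointwise. First I would fix $x \in G$ and unwind the left-hand side. Reading each term $gHg^{-1}$ in the group algebra as the characteristic function of the conjugate subgroup $gHg^{-1} \subseteq G$ (which is how the sums-as-functions convention is being used throughout the proof of Theorem 2.1), the total sum evaluated at $x$ simply counts how many conjugates of $H$ contain $x$:
\[
\Bigg(\sum_{g \in G} gHg^{-1}\Bigg)(x) = \#\{g \in G : x \in gHg^{-1}\} = \#\{g \in G : g^{-1}xg \in H\}.
\]

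Next I would invoke the Frobenius formula for the character of an induced representation, which for a character $\psi$ of $H$ reads
\[
\chi_{\Ind_H^G(\psi)}(x) = \frac{1}{h}\sum_{\substack{g \in G \\ g^{-1}xg \in H}} \psi(g^{-1}xg).
\]
Specializing to $\psi = \chi_0$, every surviving summand equals $1$, so the right-hand side collapses to $\frac{1}{h}\#\{g \in G : g^{-1}xg \in H\}$. Multiplying by $h$ reproduces exactly the count obtained above, and since $x$ was arbitrary the two functions on $G$ coincide, giving the claimed identity.

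There is no deep obstacle here; the only point requiring care is the bookkeeping around the two meanings of the symbol $gHg^{-1}$ — namely the formal element $\sum_{h \in H} ghg^{-1}$ of $\bb{C}[G]$ versus the characteristic function of the set $gHg^{-1}$. Because conjugation by $g$ is a bijection, the subgroup $gHg^{-1}$ has exactly $h$ distinct elements and no overcounting occurs inside a single term, so the two readings agree and the pointwise count is clean. As a sanity check I would also note the representation-theoretic interpretation: $\Ind_H^G(\chi_0)$ is the permutation character of $G$ acting on the cosets $H\backslash G$, whose value at $x$ counts the cosets fixed by $x$; grouping the $g$ with $g^{-1}xg \in H$ into $H$-cosets recovers the factor $h$ and yields the same formula by a second route.
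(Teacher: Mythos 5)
Your proof is correct and follows essentially the same route as the paper: both sides are evaluated pointwise, the left side as the count $\#\{g \in G : g^{-1}xg \in H\}$ and the right side via a standard expression for the induced character. The only cosmetic difference is that the paper computes $\chi_{\Ind_H^G(\chi_0)}$ as the fixed-point count of the permutation action on $H\backslash G$ and then groups the relevant $g$ into $H$-cosets to extract the factor $h$, whereas you quote the Frobenius formula directly (and you note the permutation-character route yourself as a cross-check); the two are the same computation packaged differently.
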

\begin{proof}
This is proven on page 18 of \cite{YangYin}, but we sketch a proof.  Recall that the representation $\Ind_H^G(\chi_0)$ is given by 
\[
\Ind_H^G(\chi_0) = \{ f:G \rightarrow \bb{C} : f(x g) = f(g)\quad \forall x \in H, g \in G\},
\]
where $G$ acts by right translation.  The space $\Ind_H^G(\chi_0)$ consists exactly of the functions $f:H\backslash G \rightarrow \bb{C}$.  Therefore a standard calculation shows that the representation $\Ind_H^G(\chi_0)$ is isomorphic to the representation arising from the action of $G$ on $H \backslash G$ via $g \cdot H\sigma := H \sigma g^{-1}$.  Recall that we have identified $\{\sigma_1,\ldots,\sigma_n\}$ with coset representatives for $H \backslash G$.  

It is straightforward to compute the character of a permutation representation, namely it is the number of fixed points.  That is to say, for $\sigma \in G$, 
\begin{align*}
\chi_{\Ind_H^G(\chi_0)}(\sigma) &= \#\{i\in \{1,\ldots,n\} : \sigma \cdot H \sigma_i = H \sigma_i \}\\
&= \#\{i \in \{1,\ldots,n\} : \sigma \in \sigma_i H \sigma_i^{-1}\}.
\end{align*}
On the other hand,
\[
\Bigg(\sum_{g \in G} gHg^{-1} \Bigg)(\sigma) = \# \{ g \in G : \sigma \in gHg^{-1}\}.
\]
However, for a given $i$ with $\sigma \in \sigma_i H \sigma_i^{-1}$, then every $g \in \sigma_i H$ satisfies $\sigma \in gHg^{-1}$ and since $\# H = h$, we obtain $\displaystyle \sum_{g \in G} gHg^{-1} = h \chi_{\Ind_H^G(\chi_0)}$.

%has a basis $\{f_1,\ldots, f_n\}$ where
 %\begin{displaymath}
   %f_i(g) = \left\{
     %\begin{array}{lr}
       %1 & : g \in H\sigma_i\\
       %0 & : g \not\in H\sigma_i
     %\end{array}
   %\right.
%\end{displaymath} 
\end{proof}

%Our calculation shows that for any subset $S \subseteq \{1,2,\ldots,n\}$ of size $\epsilon$, we have
%\begin{align*}
%A_{\Phi_S}^0 &= \frac{1}{2}\tr_{E^c/k} - \frac{\epsilon}{n}(1-\rho)\tr_{E^c/k} + \frac{\epsilon}{n^2}(1-\rho)\chi_{\text{Ind}_H^G(\chi_0)}\\
%&+ \frac{1}{hn^2}(1-\rho)\sum_{g \in G}g \Bigg( \sum_{i \neq j \in S} \sigma_iH\sigma_j^{-1} \Bigg) g^{-1}.
%\end{align*}

Let us record a few particular cases of Theorem 2.1 which will be of use: 
\[
A_{\Phi_{\{\varnothing\}}}^0 = \frac{1}{2}\tr_{E^c/k},
\]
\[
A_{\Phi_{\{i\}}}^0= \frac{1}{2}\tr_{E^c/k} - \frac{1}{n}(1-\rho)\tr_{E^c/k} + \frac{1}{n^2}(1-\rho)\chi_{\text{Ind}_H^G(\chi_0)},
\]
\begin{align*}
A_{\Phi_{\{i,j\}}}^0 &= \frac{1}{2}\tr_{E^c/k} -\frac{2}{n}(1-\rho)\tr_{E^c/k} + \frac{2}{n^2}(1-\rho)\chi_{\text{Ind}_H^G(\chi_0)} \\
&+\frac{1}{hn^2}(1-\rho) \sum_{g \in G} \Big(g \sigma_iH \sigma_j^{-1} g^{-1} + g\sigma_j H \sigma_i^{-1}g^{-1}\Big).
\end{align*}

\begin{prop}
For any subset $S \subseteq \{1,2,\ldots,n\}$ of size $\epsilon$, we have
\[
A_{\Phi_S}^0 = \sum_{\{i,j\} \subseteq S} A_{\Phi_{\{i,j\}}}^0 - (\epsilon-2)\sum_{i \in S}A_{\Phi_{\{i\}}}^0 + \frac{(\epsilon-1)(\epsilon-2)}{2}A_{\Phi_{\{ \varnothing\}}}^0.
\]
\end{prop}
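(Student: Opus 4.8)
The plan is to read off from Theorem 2.1 that each $A_{\Phi_S}^0$ decomposes into three pieces according to how they depend on $S$: a constant piece $C := \frac{1}{2}\tr_{E^c/k}$ that does not depend on $S$ at all; a piece $L := -\frac{1}{n}(1-\rho)\tr_{E^c/k} + \frac{1}{n^2}(1-\rho)\chi_{\Ind_H^G(\chi_0)}$ whose total contribution is $\epsilon\cdot L$ and hence depends on $S$ only through $\epsilon = \#S$; and the ``off-diagonal'' piece
\[
V(S) := \frac{1}{hn^2}(1-\rho)\sum_{g\in G}g\Big(\sum_{i\neq j\in S}\sigma_i H\sigma_j^{-1}\Big)g^{-1}.
\]
Thus $A_{\Phi_S}^0 = C + \epsilon L + V(S)$, and the recorded special cases read $A_{\Phi_{\{\varnothing\}}}^0 = C$, $A_{\Phi_{\{i\}}}^0 = C + L$ (since $V(\{i\})=0$), and $A_{\Phi_{\{i,j\}}}^0 = C + 2L + V(\{i,j\})$.

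The key structural observation is that $V$ is additive over unordered pairs: since the inner sum defining $V(S)$ ranges over all ordered pairs $(i,j)$ with $i\neq j$ in $S$, grouping each pair with its transpose gives $V(S) = \sum_{\{i,j\}\subseteq S}V(\{i,j\})$. Consequently the whole identity splits into three independent identities, one for each of $C$, $L$, and $V$, and it suffices to check that the coefficients on the right-hand side assemble correctly against each piece. For the off-diagonal piece this is immediate: among the three terms on the right only $\sum_{\{i,j\}\subseteq S}A_{\Phi_{\{i,j\}}}^0$ contributes a $V$-term, and it contributes exactly $\sum_{\{i,j\}\subseteq S}V(\{i,j\}) = V(S)$.

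For the $C$- and $L$-pieces the claim reduces to two elementary polynomial identities in $\epsilon$, obtained by counting the $\binom{\epsilon}{2}$ pairs and the $\epsilon$ singletons. The net coefficient of $C$ works out to $\binom{\epsilon}{2} - (\epsilon-2)\epsilon + \frac{(\epsilon-1)(\epsilon-2)}{2}$, which I expect to telescope to $1$, and the net coefficient of $L$ works out to $2\binom{\epsilon}{2} - (\epsilon-2)\epsilon = \epsilon(\epsilon-1) - \epsilon(\epsilon-2)$, which I expect to collapse to $\epsilon$. Matching these against $A_{\Phi_S}^0 = C + \epsilon L + V(S)$ then finishes the proof.

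The only real point to watch — the step I would call the main obstacle — is the bookkeeping for the constant piece $C$. Because $C$ occurs in every $A_{\Phi_{\{i,j\}}}^0$, every $A_{\Phi_{\{i\}}}^0$, and in $A_{\Phi_{\{\varnothing\}}}^0$, it is heavily overcounted on the right-hand side, and the coefficients $-(\epsilon-2)$ and $\frac{(\epsilon-1)(\epsilon-2)}{2}$ are precisely the inclusion–exclusion corrections that collapse this overcount to a single copy of $C$. Verifying that the quadratic-in-$\epsilon$ coefficient indeed simplifies to $1$ is the crux; everything else is routine. Conceptually, the statement is nothing more than the reconstruction of a multilinear polynomial of degree at most $2$ in the Boolean variables $[i\in S]$ from its values on the empty set, the singletons, and the pairs, with $S\mapsto A_{\Phi_S}^0$ playing the role of that polynomial.
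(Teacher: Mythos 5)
Your proposal is correct and follows essentially the same route as the paper: both arguments read the decomposition off of Theorem 2.1, use the additivity of the off-diagonal term over unordered pairs, and reduce the rest to the polynomial identities $\binom{\epsilon}{2}-\epsilon(\epsilon-2)+\frac{(\epsilon-1)(\epsilon-2)}{2}=1$ and $\epsilon(\epsilon-1)-\epsilon(\epsilon-2)=\epsilon$. Your organization as $A_{\Phi_S}^0=C+\epsilon L+V(S)$ is a clean repackaging (and handles $\epsilon=0,1$ uniformly, which the paper treats separately), but it is the same underlying computation.
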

\begin{proof}
This proposition is clear when $\epsilon =0,1$, where we interpret an empty sum as 0. For $\epsilon\geq 2$, we have
\begin{align*}
\sum_{\{i,j\} \subseteq S}A_{\Phi_{\{i,j\}}}^0 &= \sum_{\{i,j\} \subseteq S}\Bigg( \frac{1}{2}\tr_{E^c/k} -\frac{2}{n}(1-\rho)\tr_{E^c/k} + \frac{2}{n^2}(1-\rho)\chi_{\text{Ind}_H^G(\chi_0)} \\
&+\frac{1}{hn^2}(1-\rho)\sum_{g \in G} \Big( g \sigma_i H \sigma_j^{-1} g^{-1} + g \sigma_jH\sigma_i^{-1}g^{-1}\Big) \Bigg)\\
&= \frac{\epsilon(\epsilon-1)}{4}\tr_{E^c/k} - \frac{\epsilon(\epsilon-1)}{n}(1-\rho)\tr_{E^c/k} + \frac{\epsilon(\epsilon-1)}{n^2}(1-\rho)\chi_{\text{Ind}_H^G(\chi_0)} \\
&+ \frac{1}{hn^2}(1-\rho) \sum_{g \in G} g\Big( \sum_{i\neq j \in S}\sigma_iH\sigma_j^{-1}\Big) g^{-1}.
\end{align*}

From Theorem 2.1, we can conclude that 
\begin{align*}
A_{\Phi_S}^0 - \sum_{\{i,j\} \subseteq S}A_{\Phi_{\{i,j\}}}^0 &= \frac{-(\epsilon+1)(\epsilon-2)}{4}\tr_{E^c/k} + \frac{\epsilon(\epsilon-2)}{n}(1-\rho)\tr_{E^c/k}\\ 
&-\frac{\epsilon(\epsilon-2)}{n^2}(1-\rho)\chi_{\text{Ind}_H^G(\chi_0)}\\
&= -(\epsilon-2)\sum_{i \in S}A_{\Phi_{\{i\}}}^0 + \frac{(\epsilon-1)(\epsilon-2)}{2} A_{\Phi_{\{\varnothing\}}}^0.
\end{align*}
\end{proof}

For a given CM type $\Phi_S$, we have written $A_{\Phi_S}^0$ in terms of the corresponding function $A_{\Phi}^0$ for CM types of signatures $(n,0), (n-1,1),$ and $(n-2,2)$.  To finish off the proof, we will use Colmez's $\height$ function as well as recent work of Yang and Yin \cite{YangYin} proving the Colmez conjecture for CM types of signature $(n,0)$ and $(n-1,1)$.

%For a given CM type $\Phi_S$, we have written $A_{\Phi_S}^0$ in terms of the corresponding function $A_{\Phi}^0$ for CM types of signatures $(n,0), (n-1,1),$ and $(n-2,2)$.  We will use recent work of Yang and Yin \cite{YangYin}

%We also use recent work of Yang and Yin \cite{YangYin} building off of the proof of the Average Colmez conjecture due to proving the Colmez conjecture for CM types of signature $(n,0)$ and $(n-1,1)$.
\begin{thm}
Suppose the Colmez conjecture holds for all CM types of $E$ of signature $(n-2,2)$. Then the Colmez conjecture holds for all CM types of $E$.
\end{thm}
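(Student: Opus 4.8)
The plan is to turn the Colmez conjecture into a statement about two $\bb{Q}$-linear functionals agreeing on a spanning set, and to exploit the decomposition already established in Proposition 2.4. Since every CM type of $E = kF$ is of the form $\Phi_S$ for some $S \subseteq \{1,\ldots,n\}$, it suffices to verify the identity $\height(A_{\Phi_S}^0) = Z(0, A_{\Phi_S}^0)$ for every such $S$. Proposition 2.4 has written the class function of an arbitrary CM type as a fixed $\bb{Q}$-linear combination of the class functions attached to signatures $(n,0)$, $(n-1,1)$, and $(n-2,2)$, so these three families form the relevant spanning set.

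First I would record that the assignment $f \mapsto Z(0,f)$ is $\bb{Q}$-linear on the space of class functions of $\Gal(E^c/\bb{Q})$: writing $f = \sum_{\chi} a_{\chi}\chi$ in the basis of irreducible characters, the definition gives $Z(0,f) = \sum_{\chi} a_{\chi} Z(0,\chi)$, which is linear in the coefficients and hence in $f$. Colmez's height $\height$ is $\bb{Q}$-linear on $\cl{CM}^0$ by construction, and every $A_{\Phi_S}^0$ lies in $\cl{CM}^0$. Thus the Colmez conjecture for a CM type $\Phi$ is exactly the assertion that the two $\bb{Q}$-linear functionals $\height$ and $Z(0,\cdot)$ take the same value at the single point $A_{\Phi}^0$.

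Next I would apply both functionals to the decomposition of Proposition 2.4. By linearity of $\height$,
\[
\height(A_{\Phi_S}^0) = \sum_{\{i,j\} \subseteq S}\height(A_{\Phi_{\{i,j\}}}^0) - (\epsilon-2)\sum_{i \in S}\height(A_{\Phi_{\{i\}}}^0) + \frac{(\epsilon-1)(\epsilon-2)}{2}\height(A_{\Phi_{\{\varnothing\}}}^0),
\]
and the identical identity holds with $\height$ replaced by $Z(0,\cdot)$ throughout. It therefore suffices to know that $\height$ and $Z(0,\cdot)$ agree on each of the three building blocks. The recent work of Yang and Yin supplies the signature $(n,0)$ case (the single type $\Phi_{\{\varnothing\}}$) and the signature $(n-1,1)$ case (the types $\Phi_{\{i\}}$), while the hypothesis of the theorem supplies the signature $(n-2,2)$ case (the types $\Phi_{\{i,j\}}$). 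Matching the two displayed expressions term by term then forces $\height(A_{\Phi_S}^0) = Z(0, A_{\Phi_S}^0)$ for every $S$, which is the Colmez conjecture for all CM types of $E$.

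Because all of the genuinely arithmetic content is already in hand --- the explicit convolution of Theorem 2.1, the combinatorial identity of Proposition 2.4, and the Yang--Yin theorem for the low-signature cases --- no serious obstacle remains in this final step. The only point requiring care is the verification that $Z(0,\cdot)$ indeed passes through the $\bb{Q}$-linear combination, i.e.\ that the passage to irreducible constituents is additive; this is immediate from the definition of $Z(s,A_{\Phi}^0)$, so the whole argument reduces to bookkeeping with two linear functionals on a finite-dimensional $\bb{Q}$-vector space.
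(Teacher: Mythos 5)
Your proposal is correct and follows essentially the same route as the paper: apply the $\bb{Q}$-linear functionals $\height$ and $Z(0,\cdot)$ to the decomposition of Proposition 2.4, invoke Yang--Yin for the signature $(n,0)$ and $(n-1,1)$ terms and the hypothesis for the $(n-2,2)$ terms, and match term by term. No substantive difference from the paper's argument.
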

\begin{proof}
Recall that Colmez defined a $\bb{Q}$-linear height function $\height:\cl{CM}^0 \rightarrow \bb{R}$ and  conjectured that $\height(A_{\Phi}^0) = Z(0,A_{\Phi}^0)$ for any CM type $\Phi$ of a CM field.  We note that $Z(0,\cdot)$ is also a $\bb{Q}$-linear function on $\cl{CM}^0$.

%The reason that $Z(0,A_{\Phi}^0) \in \bb{R}$  (rather than just $\in \bb{C}$) is Lemme 2 in Sur la hauteur.

Let $S \subseteq \{1,2,\ldots,n\}$ be a subset of size $\epsilon$.  In \cite{YangYin}, Yang and Yin show that the Colmez conjecture holds for all CM types of signature $(n,0)$ and $(n-1,1)$.  Then, supposing that $\height(A_{\Phi_{\{i,j\}}}^0) = Z(0,A_{\Phi_{\{i,j\}}}^0)$ for any $i,j$, we obtain
\begin{align*}
\height(A_{\Phi_S}^0) &= \height\Bigg(\sum_{\{i,j\} \subseteq S}A_{\Phi_{\{i,j\}}}^0-(\epsilon-2)\sum_{i \in S}A_{\Phi_{\{i\}}}^0 + \frac{(\epsilon-1)(\epsilon-2)}{2}A_{\Phi_{\{\varnothing\}}}^0\Bigg)\\
&=\sum_{\{i,j\} \subseteq S}\height(A_{\Phi_{\{i,j\}}}^0) - (\epsilon-2)\sum_{i \in S}\height(A_{\Phi_{\{i\}}}^0)+\frac{(\epsilon-1)(\epsilon-2)}{2}\height(A_{\Phi_{\{\varnothing\}}}^0) \\
&= \sum_{\{i,j\} \subseteq S}Z(0,A_{\Phi_{\{i,j\}}}^0) - (\epsilon-2)\sum_{i \in S}Z(0,A_{\Phi_{\{i\}}}^0) + \frac{(\epsilon-1)(\epsilon-2)}{2}Z(0,A_{\Phi_{\{\varnothing\}}}^0)\\
&= Z\Bigg(0, \sum_{\{i,j\} \subseteq S}A_{\Phi_{\{i,j\}}}^0-(\epsilon-2)\sum_{i \in S}A_{\Phi_{\{i\}}}^0+ \frac{(\epsilon-1)(\epsilon-2)}{2}A_{\Phi_{\{\varnothing\}}}^0\Bigg) \\
&= Z(0,A_{\Phi_S}^0)
\end{align*}

\end{proof}
\section{Galois Action on CM Types}
There is an action of Galois on the set of CM types. Namely, $g\in \Gal(E^c/\bb{Q})$ acts on a CM type $\Phi$ by $g\cdot \Phi := \{ g\sigma : \sigma \in \Phi\}$.  It is well known and straightforward to check that if $\Phi_1$ and $\Phi_2$ are two CM types that are equivalent under this action, then $A_{\Phi_1}^0 = A_{\Phi_2}^0$.

As in Section 2, let $F$ be a totally real number field with Galois closure $F^c$, with $G:=\Gal(F^c/\bb{Q})$ and $H:=\Gal(F^c/F)$.  Let $k$ be an imaginary quadratic field and consider the unitary CM field $E:=kF$.  We can describe the action of $\Gal(E^c/\bb{Q}) \cong G \times \bb{Z}/2\bb{Z}$ on the set of CM types.  The $\bb{Z}/2\bb{Z}$ component acts as complex conjugation, taking a CM type of signature $(n-\epsilon,\epsilon)$ to a CM type of signature $(\epsilon, n-\epsilon)$.  The action of $\Gal(E^c/k)$ fixes the signature of a CM type and this action on CM types of signature $(n-\epsilon,\epsilon)$ is isomorphic to the action of $G$ on the set of subsets of $G/H$ of size $\epsilon$.

%The action of $\Gal(E^c/k) \cong G $ on the set of CM types of signature $(n-\epsilon,\epsilon)$ is isomorphic to the action of $G$ on the set of subsets of $G/H$ of size $\epsilon$, and the $\Gal(k/\bb{Q})$ action corresponds to complex conjugation taking 

%In the setup of Section 2 of this paper, since $G$ acts transitively on $G/H$, all CM types of signature $(n-1,1)$ are equivalent.  This explains the fact that for any $i \in \{1,2,\ldots,n\}$, the function $A_{\Phi_{\{i\}}}^0$ is independent of $i$.

One of the main results of \cite{YangYin} is that the Colmez conjecture holds if we average amongst CM types of a given signature.  That is to say, if $\Phi(E)_{\epsilon}$ denotes all CM types of $E$ of signature $(n-\epsilon,\epsilon)$, then 
\[
\sum_{\Phi \in \Phi(E)_{\epsilon}}\height(A_{\Phi}^0) = \sum_{\Phi \in \Phi(E)_{\epsilon}} Z(0,A_{\Phi}^0).
\]
If there is only one equivalence class of CM types in a given signature, then their result immediately implies that the Colmez conjecture holds for the CM type of that signature.  This is the idea behind Yang and Yin's proof that the Colmez conjecture holds for CM types of signature $(n,0)$ and $(n-1,1)$.  In particular, combining these ideas with our Theorem 2.4 gives the following theorem.
\begin{thm}
Let $k$ be an imaginary quadratic field and let $F$ be a totally real number field with Galois closure $F^c$.  Let $H:=\Gal(F^c/F) \leq \Gal(F^c/\bb{Q}) =: G$.  If $G$ acts 2-transitively on $G/H$, then the Colmez conjecture holds for every CM type of the unitary CM field $E:=kF$.
\end{thm}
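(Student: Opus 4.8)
The plan is to reduce everything to the observation that the $2$-transitivity hypothesis forces a single Galois orbit of CM types of signature $(n-2,2)$, and then to feed this into the averaging theorem of Yang and Yin together with Theorem 2.4. Concretely, I would argue that under the hypothesis there is exactly one equivalence class of signature $(n-2,2)$ CM types, deduce that the Colmez conjecture holds for each such type directly from the averaged identity, and then invoke Theorem 2.4 to pass to all CM types of $E$.

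First I would unwind the $2$-transitivity assumption. By definition, $G$ acting $2$-transitively on $G/H$ means that $G$ acts transitively on the set of ordered pairs of distinct cosets; this immediately implies transitivity on the set of unordered $2$-element subsets of $G/H$, since any unordered pair can be realized as the underlying set of an ordered pair. As recalled in Section 3, the action of $\Gal(E^c/k)\cong G$ on CM types of signature $(n-2,2)$ is isomorphic to the action of $G$ on $2$-element subsets of $G/H$. Combining these two facts, I would conclude that all CM types of signature $(n-2,2)$ lie in a single equivalence class under the Galois action on CM types.

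Next I would exploit the invariance noted at the start of Section 3: if $\Phi_1$ and $\Phi_2$ are Galois-equivalent, then $A_{\Phi_1}^0 = A_{\Phi_2}^0$. Since there is only one equivalence class in signature $(n-2,2)$, there is a single class function $A^0 := A_{\Phi}^0$ common to every $\Phi \in \Phi(E)_2$, and hence a common value of both $\height(A^0)$ and $Z(0,A^0)$. The Yang--Yin averaging identity for signature $\epsilon = 2$ then reads
\[
\#\Phi(E)_2 \cdot \height(A^0) = \#\Phi(E)_2 \cdot Z(0,A^0).
\]
Provided $\#\Phi(E)_2 > 0$, I can divide to obtain $\height(A^0) = Z(0,A^0)$, i.e.\ the Colmez conjecture holds for every CM type of signature $(n-2,2)$. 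Applying Theorem 2.4 then yields the conjecture for all CM types of $E$, which is the desired statement.

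The argument is essentially an assembly of established pieces, so I do not expect a serious analytic or algebraic obstacle; the genuine content is the single-orbit observation, which is where the hypothesis is used. The only points requiring care are bookkeeping ones: verifying that $2$-transitivity on the cosets really does descend to transitivity on unordered pairs (not merely ordered ones), and checking the degenerate ranges of $n$. For $n \geq 2$ one has $\#\Phi(E)_2 \geq 1$, so the division above is legitimate; the cases $n \leq 1$ carry no signature $(n-2,2)$ types and are either vacuous or already covered by the known low-degree cases, so I would dispose of them by a brief remark rather than by the main argument.
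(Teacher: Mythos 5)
Your proposal is correct and follows essentially the same route as the paper: $2$-transitivity gives a single Galois orbit of signature $(n-2,2)$ CM types, the Yang--Yin averaging identity then yields the Colmez conjecture for each such type, and Theorem 2.4 extends it to all CM types of $E$. The paper leaves this assembly implicit ("combining these ideas with our Theorem 2.4 gives the following theorem"), and your write-up just makes the same steps explicit.
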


%The doubly transitive action is exactly the condition that guarantees that all the CM types of signature $(n-2,2)$ are equivalent. This theorem explains an observation in the paper \cite{ParentiPSL2}. In that paper, we considered the case where $G = \PSL_2(\bb{F}_q)$ and $H=B$, the subgroup of upper triangular matrices.  Lemma 4.2 of that paper showed that all CM types of signature $(n-2,2)$ are equivalent and thus the main result of that paper immediately follows from Theorem 3.1.

We note that there are examples of such a $(G,H)$.  In particular, $\PSL_n(\bb{F}_q)$ and $\PGL_n(\bb{F}_q)$ both act 2-transitively on $\bb{P}^{n-1}(\bb{F}_q)$, and so we can take $H$ to be the stabilizer of a point.  More precisely, we may take 
\[
H:= \begin{bmatrix} a_{11}& a_{12} & \ldots  & a_{1n} \\ 0 & a_{22}& \ldots & a_{2n} \\ \vdots & \ddots\\ 0 & a_{n2}& \ldots & a_{nn}  \end{bmatrix}
\]
In the case that $n=2$, $H$ is the Borel subgroup of upper triangular matrices and we recover the result from \cite{ParentiPSL2}.

For small values of $n,q$, the groups $\PSL_n(\bb{F}_q)$ and $\PGL_n(\bb{F}_q)$ can be realized as the Galois groups of totally real number fields over $\bb{Q}$.  In particular, consulting the LMFDB \cite{lmfdb} shows that for $q \leq 11$, the groups $\PSL_2(\bb{F}_q)$ and $\PGL_2(\bb{F}_q)$ appear as the Galois groups of totally real fields.

Furthermore, the fact that $G$ is the Galois group of the Galois closure of $F$, and $H$ is the subgroup that fixes the field $F$ implies that the action of $G$ on $G/H$ induces an embedding $G \hookrightarrow\operatorname{Sym}(G/H)$ of $G$ into the symmetric group on the set of cosets of $G$ by $H$.

Thus, we may apply Theorem 3.1 to any $G$ which is a doubly transitive subgroup of a symmetric group.  As a corollary of the classification of finite simple groups, a classification of doubly transitive groups is known.  There are infinite families of examples and sporadic examples. We list the other doubly transitive groups and refer the reader to \cite{Diximer} and \cite{BHRD} for further details on  these groups and their doubly transitive actions.

The alternating and symmetric groups $A_n$ and $S_n$ are doubly transitive subgroups of $S_n$ for any $n$.  For these groups, we take $H$ to be $A_{n-1}$ and $S_{n-1}$ respectively.  The Colmez conjecture was already discussed in this case by Yang and Yin \cite{YangYin}.

Another family of examples is the symplectic groups $\operatorname{Sp}_{2m}(\bb{F}_2)$ for $m \geq 2$.  For these groups, two choices of subgroups induce doubly transitive actions. This discussion involves quadratic forms, symmetric bilinear forms, and alternating bilinear forms in characteristic 2, so we will explain in detail.  Let $0_m$ and $1_m$ denote the $m \times m$ zero and identity matrix respectively and define the matrix $J$ by
\[
J:= \begin{bmatrix} 0_m & 1_m \\ -1_m & 0_m\end{bmatrix}.
\]
If we denote $x^T$ to be the transpose of a matrix $x$, we define the symplectic group as follows, 
\[
\operatorname{Sp}_{2m}(\bb{F}_2):=\{x \in \operatorname{GL}_{2m}(\bb{F}_2) : x^T J x = J\}.
\]
We can also view $\operatorname{Sp}_{2m}(\bb{F}_2)$ as the set of linear transformations which preserve the following alternating bilinear form $\psi$.
\begin{align*}
\psi: \bb{F}_2^{2m}\times \bb{F}_2^{2m}&\rightarrow \bb{F}_2 \\ 
\Bigg(\begin{bmatrix} u_1 \\ u_2 \end{bmatrix}, \begin{bmatrix} v_1 \\ v_2 \end{bmatrix}\Bigg) &\mapsto u_1 \cdot v_2 - u_2 \cdot v_1
\end{align*}
That is to say, 
\[
\operatorname{Sp}_{2m}(\bb{F}_2) := \{x \in \operatorname{GL}_{2m}(\bb{F}_2) : \psi(xu,xv) = \psi(u,v) \quad \forall u,v \in \bb{F}_2^{2m}\}.
\]

However, $\psi$ is also a symmetric form over $\bb{F}_2$ and the doubly transitive action we are interested in involves orthogonal groups of associated quadratic forms.
 
\begin{defn}
If $V$ is a vector space over $\bb{F}_2$, then a quadratic form on $V$ is a function $q:V \rightarrow \bb{F}_2$ such that 
\begin{enumerate}
\item $q(\lambda v) = \lambda^2 q(v)$ for all $\lambda \in \bb{F}_2, v \in V$;
\item The function $f(u,v):= q(u+v)-q(u)-q(v)$ is a symmetric bilinear form.
\end{enumerate}
\end{defn}
Over $\bb{F}_2$, a quadratic form determines a symmetric bilinear form, but many quadratic forms can determine the same symmetric bilinear form.
%If $V$ is a $2m$ dimensional vector space over $\bb{F}_2$, then there are two quadratic forms on $V$ up to isometry, $Q^+:V \rightarrow \bb{F}_2$ and $Q^-:V \rightarrow\bb{F}_2$.  To distinguish these quadratic forms,  we look at isotropic vectors. A vector $v \in V$ is isotropic for a quadratic form $Q$ on $V$ if $Q(v) = 0$.  The Witt index of a quadratic form is the largest dimension of an isotropic subspace (that is, a subspace consisting of only isotropic vectors).  The quadratic form $Q^+$ has Witt index $m$ while the quadratic form $Q^-$ has Witt index $m-1$.  
Up to isometry, there are two quadratic forms on $\bb{F}_2^{2m}$, given by
\begin{align*}
Q^+(v_1,\ldots,v_{2m}) &=v_{1}v_{m+1} + \cdots + v_{m}v_{2m},\\
Q^-(v_1,\ldots,v_{2m})&= v_{1}v_{m+1}+ \cdots + v_{m}v_{2m}+v_m+v_{2m}.
\end{align*}

We define two orthogonal groups, $GO_{2m}^+(\bb{F}_2)$ and $GO_{2m}^-(\bb{F}_2)$, as the isometry groups of these quadratic forms,
\[
\operatorname{GO}^{\pm}_{2m}(\bb{F}_2) = \{x \in \operatorname{GL}_{2m}(\bb{F}_2) : Q^{\pm}(xv) = Q^{\pm}(v) \quad \forall v \in \bb{F}_2^{2m}\}.
\]

Both $Q^+$ and $Q^-$ determine the same bilinear symmetric form, $\psi$, and thus we have that $\operatorname{GO}^+_{2m}(\bb{F}_2) \subseteq \operatorname{Sp}_{2m}(\bb{F}_2)$ and $\operatorname{GO}^-_{2m}(\bb{F}_2) \subseteq \operatorname{Sp}_{2m}(\bb{F}_2)$.  Furthermore, $\operatorname{Sp}_{2m}(\bb{F}_2)$ acts doubly transitively on the cosets by either of the aforementioned subgroups.

Applying Theorem 3.1 to this situation shows that if $F$ is a totally real number field whose Galois closure $F^c$ has Galois group $\operatorname{Sp}_{2m}(\bb{F}_2)$ and $F$ is the fixed field by either $\operatorname{GO}^+_{2m}(\bb{F}_2)$ or $\operatorname{GO}^-_{2m}(\bb{F}_2)$, then the Colmez conjecture holds for $E:=kF$.

Another class of examples is the unitary groups $\operatorname{PSU}_3(\bb{F}_q)$ and $\operatorname{PGU}_3(\bb{F}_q)$ where $q$ is a prime power.  To describe these groups and the relevant subgroups, we will follow the convention of \cite{Diximer}.  Let $\varphi$ be the following bilinear form on $\bb{F}_{q^2}^3$.
\begin{align*}
\varphi: \bb{F}_{q^2}^3 \times \bb{F}_{q^2}^3 &\rightarrow \bb{F}_{q^2} \\
\Bigg(\begin{bmatrix}
u_1\\ u_2 \\u_3\end{bmatrix} ,\begin{bmatrix}v_1 \\ v_2 \\v_3 \end{bmatrix}\Bigg) &\mapsto u_1v_3^q + u_2v_2^q+u_3v_1^q
\end{align*}

We define $\operatorname{GU}_3(\bb{F}_q)$ to be the matrices which preserve this form,
\[
\operatorname{GU}_3(\bb{F}_q):= \{A \in \operatorname{GL}_3(\bb{F}_{q^2}) : \varphi(Au,Av) = \varphi(u,v)\quad \forall u,v \in \bb{F}_{q^2}^3\}.
\]
The action of $\operatorname{GU}_3(\bb{F}_q)$ on the 1-dimensional subspaces of $\bb{F}_{q^2}^3$ defines the projective group $\operatorname{PGU}_3(\bb{F}_q)$ and taking those matrices of determinant 1 defines the group $\operatorname{PSU}_3(\bb{F}_q)$.  

An isotropic vector $v \in \bb{F}_{q^2}^3$ is a vector such that $\varphi(v,v) = 0$.  The groups $\operatorname{PGU}_3(\bb{F}_q)$ and $\operatorname{PSU}_3(\bb{F}_q)$ both act doubly transitively on the set of 1-dimensional isotropic subspaces of $\bb{F}_{q^2}^3$, so therefore we may take $H$ to be the stabilizer of any 1-dimensional isotropic subspace. A quick calculation shows that the stabilizer in $\operatorname{PSU}_3(\bb{F}_q)$ and $\operatorname{PGU}_3(\bb{F}_q)$ of the subspace spanned by $\begin{bmatrix} 1 \\ 0 \\ 0\end{bmatrix}$ is exactly the upper triangular matrices of the respective group

%To get upper triangular, look at \varphi(e_1,e_2) = \varphi(Ae_1,Ae_2)

Applying Theorem 3.1 to this group action shows that the Colmez conjecture holds for $E:=kF$ where $k$ is an imaginary quadratic field, $F$ is a totally real number field such that $\operatorname{Gal}(F^c/\bb{Q})\cong \operatorname{PSU}_3(\bb{F}_q)$ or $\operatorname{PGU}_3(\bb{F}_q)$ and $\operatorname{Gal}(F^c/F)$ is the subgroup of upper triangular matrices. 

There are a two more infinite families and a few more sporadic examples which we list here. 
\begin{itemize}
\item The Suzuki groups $\operatorname{Sz}(q)$ for $q$ an odd power of 2 is a doubly transitive subgroup of $S_{q^2+1}$.
\item The Ree groups $\operatorname{R}(q)$ for $q$ an odd power of 3 is a doubly transitive subgroup of $S_{q^3+1}$.
\item The Mathieu groups $M_{11}, M_{12}, M_{22},M_{23},M_{24}$ are doubly transitive subgroups of $S_{11}, S_{12}, S_{22}, S_{23}$, and $S_{24}$ respectively.  
\item The Mathieu group $M_{11}$ is a doubly transitive subgroup of $S_{12}$ and $\operatorname{PSL}_2(\bb{F}_{11})$ is a transitive subgroup of $S_{11}$.  
\item The alternating group $A_7$ is a transitive subgroup of $S_8$.  
\item The Higman-Sims group $\operatorname{HS}$ is a doubly transitive subgroup of $S_{176}$.
\item The Conway group $\operatorname{Co}_3$ is a doubly transitive subgroup of $S_{276}$.

\end{itemize}

%We should also make a comment regarding recent work of Yuan and Zhang \cite{YZ}.  In their work, they proved that the Colmez conjecture holds on average.  That is, for $E$ a CM field (not necessarily unitary), they show that 
%\[
%\sum_{\Phi} \height(A_{\Phi}^0) = \sum_{\Phi} Z(0,A_{\Phi}^0)
%\]
%and furthermore, they determine the value of this expression.  To do this, they calculate a sort of average of the height of two nearby CM types, where $\Phi_1$ and $\Phi_2$ are nearby CM types if $\Phi_1 \cap \Phi_2 = n-1$.  Since any CM type of signature $(n-2,2)$ is equivalent to $\Phi_{\{1,i\}}$ for some $i \in \{2,\ldots,n\}$, every pair of CM types of signature $(n-2,2)$ constitutes a nearby pair of CM types.

\section{Acknowledgements} 
The author would like to thank Tonghai Yang for his assistance throughout work on this paper. The author would also like to thank Mois\'{e}s Herrad\'{o}n Cueto for his help with doubly transitive groups. This work was done with the partial support of National Science Foundation grant DMS-1502553.

\bibliographystyle{alpha}
\bibliography{/Users/salvatoreparenti/Desktop/TexFiles/mybib}  

\begin{thebibliography}{{LMF}17}

\bibitem[BHR13]{BHRD}
J.~{Bray}, D.~{Holt}, and C.~{Roney-Dougal}.
\newblock {\em The Maximal Subgroups of the Low-Dimensional Finite Classical
  Groups}.
\newblock London Mathematical Society, 2013.

\bibitem[{Col}93]{Col}
P.~{Colmez}.
\newblock P{\'e}riodes des vari{\'e}t{\'e}s ab{\'e}liennes {\`a} multiplication
  complexe.
\newblock {\em Ann. of Math.}, 138:625--683, 1993.

\bibitem[{Col}98]{ColSurLa}
P.~{Colmez}.
\newblock Sur la hauteur de faltings des vari{\'e}t{\'e}s {\`a} multiplication
  complexe.
\newblock {\em Compositio Math.}, 111:359--368, 1998.

\bibitem[DM96]{Diximer}
J.~{Dixon} and B.~{Mortimer}.
\newblock {\em Permutation Groups}.
\newblock Graduate Texts in Mathematics, 1996.

\bibitem[{LMF}17]{lmfdb}
The {LMFDB Collaboration}.
\newblock The {L}-functions and modular forms database.
\newblock \url{http://www.lmfdb.org}, 2017.
\newblock [Online; accessed December 2017].

\bibitem[{Par}17]{ParentiPSL2}
S.~{Parenti}.
\newblock Unitary {$PSL_2$} {CM} fields and the {C}olmez conjecture.
\newblock {\em ArXiv e-prints}, 2017.

\bibitem[{Tsi}18]{TsimermanAndreOort}
J.~{Tsimerman}.
\newblock The {Andr{\'e}-Oort} conjecture for ${A_g}$.
\newblock {\em Ann. of Math.}, 187(2):379--390, 2018.

\bibitem[YY17]{YangYin}
T.~{Yang} and H.~{Yin}.
\newblock {CM} fields of dihedral type and the colmez conjecture.
\newblock {\em Manuscripta Math.}, 2017.

\end{thebibliography}

\end{document}